\documentclass[a4wide,11pt]{amsart}

\usepackage{amsmath,url}
\usepackage{amssymb}
\usepackage{color}
\usepackage{graphicx}
\usepackage{bbm}
\usepackage{a4wide}
\usepackage{t1enc}
\usepackage{amsthm}
\newtheorem{theorem}{Theorem}[section]
\newtheorem{lemma}[theorem]{Lemma}
\newtheorem{cor}[theorem]{Corollary}

\theoremstyle{definition}
\newtheorem{definition}[theorem]{Definition}
\newtheorem{example}[theorem]{Example}
\newtheorem{remark}[theorem]{Remark}

\numberwithin{theorem}{section}

\newcommand{\sos}{\operatorname{SOS}}
\newcommand{\soc}{\operatorname{SOC}}

\newcommand{\sosg}{\operatorname{SOS(\varGamma)}}
\newcommand{\socg}{\operatorname{SOC(\varGamma)}}

\newcommand{\so}{\operatorname{SO}}

\newcommand{\g}{\varGamma}
\newcommand{\OO}{\mathcal O}
\newcommand{\sca}{\operatorname{scal}_\varGamma}

\newcommand{\re}{\right}
\newcommand{\li}{\left}

\newcommand{\NN}{\mathbbm{N}}
\newcommand{\CC}{\mathbbm{C}}
\newcommand{\QQ}{\mathbbm{Q}}
\newcommand{\RR}{\mathbbm{R}}
\newcommand{\ZZ}{\mathbbm{Z}}
\newcommand{\s}{\mathbb S}
\newcommand{\ovln}{\overline}
\newcommand{\klg}{\leqslant}

\DeclareMathOperator{\Ker}{Ker}

\newcommand{\bmidb}[2]{\{\nonscript\,{#1}\mid{#2}\nonscript\,\}}
\newcommand{\Bmidb}[2]{\left\{\,{#1}\mathrel{\hbox{$
  \displaystyle \mathsurround=0pt \nulldelimiterspace=0pt
  \left|\vphantom{{#1}{#2}}\right.$}}{#2}\,\right\}}


\newdimen{\standardlabelwidth}
\settowidth{\standardlabelwidth}{\textup{(0)}}
\standardlabelwidth-\standardlabelwidth
  \advance\standardlabelwidth by 2\normalparindent
\newcommand{\standardlabel}[1]{#1\kern\standardlabelwidth}

\begin{document}
\title{Similarity versus Coincidence Rotations of Lattices}
\author{Svenja Glied}
\address{Fakult\"at f\"ur Mathematik, Universit\"at Bielefeld, Postfach 100131, 33501 Bielfeld, 
       Germany}
\curraddr{}
\email{\{sglied,mbaake\}@math.uni-bielefeld.de}
\urladdr{http://www.math.uni-bielefeld.de/baake/ }

\thanks{}

\author{Michael Baake}
\address{}
\curraddr{}
\email{}
\thanks{}

\begin{abstract}
           The groups of similarity and coincidence rotations of an arbitrary lattice $\g$ in 
          $d$-dimensional Euclidean space are considered. It is shown that the 
          group of similarity rotations contains the coincidence rotations as a normal subgroup.
          Furthermore, the structure of the corresponding factor group is examined.
          If the dimension $d$ is a prime number, this factor group is an elementary Abelian
          $d$-group. Moreover, if $\g$ is a rational lattice, the factor group is 
          trivial ($d$ odd) or an elementary Abelian $2$-group ($d$ even).
\end{abstract}
\maketitle
\section{Introduction}\label{intro}
The classification of colour symmetries and that of grain boundaries in crystals 
and quasicrystals are
intimately related to the existence of similar and coincidence
sublattices of the underlying lattice of periods or the corresponding
translation module. It is thus of interest to understand the
corresponding groups of isometries from a more mathematical perspective.
An example for the structure of the groups of coincidence rotations
and similarity rotations of planar lattices is considered and the
factor group of similarity modulo coincidence rotations is calculated.
More generally, for lattices in $d$ dimensions, we show that the factor group is
the direct sum of cyclic groups of prime power orders that divide $d$. In
the case of rational lattices, which include hypercubic
lattices and all root lattices, this means that the factor group is
either trivial or an elementary Abelian $2$-group, depending on the parity
of $d$.
\section{Coincidence Rotations}
A \emph{lattice} in $\RR^d$ is a subgroup of the form
$$\g = \ZZ b_1 \oplus \ZZ b_2 \oplus \ldots \oplus \ZZ b_d,$$
where $\{b_1,\ldots,b_d\}$ is a basis of $\RR^d$.  
Two lattices $\g,\g'$ in $\RR^d$ are called 
\emph{commensurate} if their 
intersection $\g \cap \g'$ has finite index both in $\g$ and in $\g'$.
In this case, we write $\g \sim \g'$.
Commensurateness of lattices is an equivalence relation (cf.~\cite{Baake}).
An element $R \in \so(d)$ is called a 
\emph{coincidence rotation of $\g$}, if $\g \sim R\g$. We thus define 
$$ \socg:= \bmidb{R \in \so(d)}{\g \sim R\g},$$
which is a subgroup of $\so(d)$.
\begin{example}[The square lattice $\ZZ^2$]
 As shown in Thm.~3.1 of \cite{Baake}, 
 the coincidence rotations of $\ZZ^2$ are precisely the special 
 orthogonal matrices with rational entries, 
 \begin{equation*}
   \soc(\ZZ^2) = \so(2,\QQ).
 \end{equation*}
 On the other hand, one can identify $\ZZ^2$ with the Gaussian integers $\ZZ[i]$, where $i$ is
 the imaginary unit. 
 Then, a rotation $R(\varphi)$ with rotation angle $\varphi$ corresponds 
 to a multiplication with the complex number 
 $e^{i\varphi} \in \li(\QQ(i)\cap \s^1\re)\simeq \soc(\ZZ^2)$; see \cite{Roth}.
 Using the fact that $\ZZ[i]$ is a unique factorisation domain, each coincidence rotation 
 uniquely factorises as 
 \begin{equation}\label{Zerlegung}
  e^{i\varphi} = 
         \varepsilon\prod_{p\equiv 1(4)}\li(\frac{\,\omega_p\,}{\ovln{\omega_p}}\re)^{n_p},
 \end{equation}
 where $\varepsilon$ is a unit in $\ZZ[i]$, $n_p \in \ZZ$ with only finitely many of them 
 nonzero, $p$ runs through the rational primes congruent to $1$ (mod $4$), and $p$ factorises
 as $p = \omega_p\ovln{\omega_p} $ in $\ZZ[i]$ with $\omega_p/\ovln{\omega_p}$ not a 
 unit.
 This shows that $\soc(\ZZ^2)$ is a countably generated Abelian group. More precisely, 
 \begin{equation*}
   \soc(\ZZ^2) = C_4 \times \ZZ^{(\aleph_0)},
 \end{equation*}
 where $C_4$ denotes the cyclic group of order $4$ (here generated by $i$) and 
 $\ZZ^{(\aleph_0)}$ stands for
 the direct sum of countably many infinite cyclic groups, which are here generated by the 
 $\omega_p/\ovln{\omega_p}$ with $p\equiv 1$ (mod $4$) (cf.~\cite{Roth}).
 
\end{example}

\section{Similarity Rotations}

Let $\g \subset \RR^d$ again be a lattice. Define
\begin{displaymath}
   \sosg := \bmidb{R \in \so(d)}{\g \sim \alpha R \g 
               \text{\,\, for some\,\,} \alpha > 0}.
\end{displaymath}
The elements of $\sosg$ are called \emph{similarity rotations}.
$\sosg$ is a group (cf.~\cite{Heu}) and contains $\socg$ as a subgroup.

\begin{example}[$\ZZ^2$]

 For $\ZZ^2$, the group of similarity rotations consists precisely of the set of 
 $\ZZ^2$-directions, 
 \begin{equation}\label{sos Richtungen}
   \sos(\ZZ^2) = \Big\{\frac{a}{|a|} \,\Big | \, 0\not = a \in \ZZ[i]\Big\}. 
 \end{equation}
  We parametrise the Euclidean plane by the complex numbers $\CC$, and 
  use $\so(2)\simeq \s^1$ and $\ZZ^2=\ZZ[i]$.
  To show \eqref{sos Richtungen}, let $z \in \ZZ[i]\setminus \{0\}$. 
  Since $\ZZ[i]$ is a ring, one has
  $$ |z|\cdot \frac{z}{|z|}\,\ZZ[i]\subset \ZZ[i],$$
  so that $z/|z| \in \sos(\ZZ^2)$.
  Conversely, let $r \in \sos(\ZZ^2)$, meaning that 
  $r \in \s^1$  with $\lambda r \ZZ[i] \sim \ZZ[i]$ for 
  some $\lambda > 0$. By Remark~\ref{den} below, there exists a nonzero integer $t$ with 
  $t\lambda r\ZZ[i] \subset \ZZ[i]$.
  Since $1 \in \ZZ[i]$, this yields
  $t\lambda r \in \ZZ[i]$, say $t\lambda r = v$.
  Thus $|t\lambda|=|v|$, because $r \in \s^1$.
  This shows that $r =  v/|v|$ is a $\ZZ[i]$-direction.

 Each nonzero element of $\sos(\ZZ^2)$ is thus of the form $z/|z|$ with $0\not=z \in \ZZ[i]$.
 Using unique factorisation in $\ZZ[i]$ again, we get
 \begin{equation*}
   \frac{z}{|z|} = \li(\frac{1+i}{\sqrt 2}\re)^k \prod_{p\equiv 1(4)}
                   \li(\frac{\omega_p}{\sqrt p}\re)^{\ell_p},
 \end{equation*}
 where $0\klg k < 8$ and $\ell_p \in \ZZ$ (other restrictions as in~\eqref{Zerlegung}).
 One observes that $(1+i)/\sqrt 2$ is a primitive $8$th root of unity, hence it 
 generates the cyclic group $C_8$.
 Furthermore, one finds
 \begin{equation*} 
   \li(\frac{\omega_p}{{\sqrt p }}\re)^2 = \frac{\omega_p^2}{\omega_p \ovln{\omega_p}} = 
                                               \frac{\,\omega_p\,}{\ovln{\omega_p}}.
 \end{equation*}
 This shows that the generators of $\soc(\ZZ^2) = C_4 \times \ZZ^{(\aleph_0)}$ are the squares
 of the generators of $\sos(\ZZ^2)$.
 Thus 
 \begin{equation*}
   \soc(\ZZ^2) = \Bmidb{x^2}{x \in \sos(\ZZ^2)}
               =: \li(\sos(\ZZ^2)\re)^2.
 \end{equation*}
  The following more general result was shown in~\cite{Roth}: For all cyclotomic fields 
 $\QQ(\xi_n)$
 of class number one (excluding $\QQ$), one has
 \begin{equation*}
   \soc(\OO_n) \simeq C_{N(n)} \times  \ZZ^{(\aleph_0)},
 \end{equation*} 
 where $\OO_n=\ZZ[\xi_n]$ is the ring of integers in $\QQ(\xi_n)$ and 
 $N(n)= \operatorname{lcm}(n,2)$. 

 Returning to our example, we find the structure of the factor group to be
 \begin{eqnarray*}
   \sos(\ZZ^2)/\soc(\ZZ^2)  &\simeq &  \li(C_8/C_4\re) \times C_2^{(\aleph_0)}\\
                           &\simeq & C_2 \times  C_2^{(\aleph_0)},  
 \end{eqnarray*}
 where $ C_2^{(\aleph_0)}$ stands for the direct sum of countably many cyclic groups of order $2$.
 Hence, the factor group is the direct sum of cyclic groups of order $2$, which means that it is an 
 elementary Abelian $2$-group.              
 More generally, for arbitrary lattices in Euclidean $d$-space, we shall see below that the
 group $\soc$ is a normal subgroup of $\sos$, whence the factor group always exists.
\end{example}

\section{Factor Group}

Throughout this section, let $\varGamma$ be a lattice in $\RR^d$, with $d \geq 2$.
\begin{definition}
  For an arbitrary element $R \in \so(d)$, define
  \begin{equation*}
     \sca(R)= \bmidb{\alpha \in \RR}{\varGamma \sim \alpha R \varGamma}.
  \end{equation*}
\end{definition}
Note that $$\sos(\g) = \bmidb{R \in \so(d)}{\sca(R)\not = \varnothing}.$$ 
\begin{remark}\label{den}
  If $\alpha \in \sca(R)$, then there exists a nonzero integer $t$ such that 
  $t\alpha R\g \subset \g$. Namely, if $\alpha~\in~\sca(R)$, the group index 
  $[\alpha R\g :(\g \cap \alpha R\g)]=t$ is finite. 
  Consequently, one has $t\alpha R\g \subset (\g\cap \alpha R\g)\subset \g$.
\end{remark}

\begin{lemma}\label{scal}
 For $R \in \sos(\varGamma)$, the following 
 assertions hold. 
  \begin{enumerate}
    \item[{\rm (1)}]    
       $ b\cdot \sca(R) \subset \sca(R)$ \/ for all \/ $b \in \QQ\setminus\{0\}$     
    \item[{\rm (2)}]
       $ r\varGamma \sim \varGamma$ \/ with \/ $r\in \RR$ \/ implies \/ $r \in \QQ$
    \item[{\rm (3)}] 
       $ \alpha\beta^{-1} \in \QQ$ \/ for all \/ $\alpha, \beta \in \sca(R)$       
  \end{enumerate}
\end{lemma}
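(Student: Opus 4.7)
The three parts feed into each other: I would prove (2) first via Remark~\ref{den} and a basis argument, then deduce (1) and (3) almost formally from it.

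For part (2), suppose $r\g \sim \g$ with $r \in \RR$. By Remark~\ref{den} (applied to $R = \id$, so that $\alpha = r$ lies in $\sca(\id)$), there exists a nonzero integer $t$ with $tr\g \subset \g$. Fix a $\ZZ$-basis $b_1,\ldots,b_d$ of $\g$. For each $i$, the vector $tr\, b_i$ lies in $\g$, so it has integer coordinates in the basis; but as an element of $\RR^d$ it equals $tr\,b_i$, i.e.\ the $i$-th basis vector scaled by $tr$. Since $b_1,\ldots,b_d$ are $\RR$-linearly independent, reading off the coordinates gives $tr \in \ZZ$, hence $r \in \QQ$. This is the main content of the lemma; the step from ``scalar multiplication takes a lattice into itself'' to ``the scalar is rational'' via a basis is the only real obstacle, though it is quite short.

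For part (1), take $\alpha \in \sca(R)$ and $b = p/q \in \QQ\setminus\{0\}$ with $p,q \in \ZZ\setminus\{0\}$. I would show first that any lattice $L$ in $\RR^d$ is commensurate with $bL$: the inclusions $p L \subset L$ and $pL = q(bL) \subset bL$ exhibit $pL$ as a subgroup of finite index $p^d$ in $L$ and of index $q^d$ in $bL$, so $L \sim bL$. Applying this to $L = \alpha R\g$ yields $\alpha R\g \sim b\alpha R\g$; combined with $\g \sim \alpha R\g$, transitivity of $\sim$ gives $\g \sim b\alpha R\g$, i.e.\ $b\alpha \in \sca(R)$.

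For part (3), let $\alpha, \beta \in \sca(R)$. Then $\alpha R\g \sim \g \sim \beta R\g$, and applying the invertible linear map $R^{-1}\beta^{-1}$ preserves commensurateness (it sends finite-index intersections to finite-index intersections). Since $\alpha\beta^{-1}$ is a scalar it commutes with $R^{-1}$, yielding
\begin{equation*}
   (\alpha\beta^{-1})\g \;=\; R^{-1}\beta^{-1}\alpha R\g \;\sim\; R^{-1}\beta^{-1}\beta R\g \;=\; \g.
\end{equation*}
Part (2) now applies with $r = \alpha\beta^{-1}$ and gives $\alpha\beta^{-1} \in \QQ$. Thus (2) is the hinge of the whole lemma, and (1) and (3) are short consequences.
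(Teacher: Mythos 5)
Your proof is correct and follows essentially the same route as the paper: part (2) via Remark~\ref{den} and a basis argument, part (1) via commensurateness of $L$ and $bL$ together with transitivity, and part (3) by reducing to (2). The only (harmless) difference is in (3), where you transport the relation back to $\g$ via $R^{-1}\beta^{-1}$ so that (2) applies literally, whereas the paper scales by $1/\beta$ and applies the argument of (2) to the lattice $R\g$.
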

\begin{proof}
 Let $\alpha \in \sca(R)$.
 For $b=b_1/b_2$ with $b_1,b_2 \in \ZZ\setminus\{0\}$, one finds
 \begin{equation*}
    \frac{b_1}{b_2}\alpha R \varGamma \sim \frac{1}{b_2} \alpha R \varGamma 
         \sim \frac{1}{b_2} \varGamma \sim \varGamma.
 \end{equation*}
  This proves~(1).  
  In order to show~(2), let $r\in \RR$ with $r\g\sim\g$. By Remark~\ref{den},
  there exists a nonzero integer $k$ with $kr\g \subset \g$.
  Now, let $\gamma \in \g$ be represented in terms of a basis 
  $\{\gamma_1,\ldots,\gamma_d\}$ of $\g$ as 
  $\gamma = \sum_{i=1}^d c_i\gamma_i$, with $c_i \in \ZZ$.
  On the other hand, $kr\gamma$ can be represented as 
  $kr\gamma = \sum_{i=1}^d a_i\gamma_i$, where $a_i \in \ZZ$. Thus
  $$\sum_{i=1}^d krc_i\gamma_i = \sum_{i=1}^d a_i\gamma_i.$$
  By assumption, $\g$ spans $\RR^d$, so that $\{\gamma_1,\ldots,\gamma_d\}$ forms an 
  $\RR$-basis of 
  $\RR^d$.
  Therefore, one has $krc_i=a_i$, yielding $r=a^{}_ic_i^{-1}k^{-1} \in \QQ$.
  Finally, (3) is obtained from~(2) as follows. By assumption, 
  one has
  \begin{equation*}
     \beta R\varGamma \sim \varGamma \sim \alpha R \varGamma.
  \end{equation*}
  Multiplying with $1/\beta$ gives
      $ R \varGamma \sim \frac{\alpha}{\beta} R \varGamma$,
  which completes the proof.  
 \end{proof}
  
Denote by $\RR^\bullet$ (by $\QQ^\bullet$) the multiplicative groups formed by the 
nonzero real (rational) numbers. Define \label{eta} a map
\begin{equation*} 
   \eta\colon \sos(\varGamma) \longrightarrow \RR^\bullet/\QQ^\bullet\\
\end{equation*}
by
\begin{equation*}
   \qquad \,\, R \longmapsto [\alpha],
\end{equation*}
where $[\,\cdot \,]$ denotes the equivalence classes of $\RR^\bullet/\QQ^\bullet$ and $\alpha $ 
is an arbitrary element of $\sca(R)$.
This map is well-defined due to the fact that $\sca(R)$ is non-empty for $R \in \sos(\varGamma)$
and by Lemma~\ref{scal}(3). 
\begin{lemma}\label{eta hom}
  The map $\eta$ is a group homomorphism with $\Ker(\eta)=\soc(\varGamma)$.
\end{lemma}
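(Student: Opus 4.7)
The plan is to verify multiplicativity of $\eta$ first and then read off the kernel. Before either step, I would record two routine but indispensable properties of the commensurateness relation $\sim$ on lattices in $\RR^d$: (i) if $\Lambda_1\sim\Lambda_2$, then $T\Lambda_1\sim T\Lambda_2$ for every $T\in\so(d)$, because $T$ is a group isomorphism of $\RR^d$ that preserves indices of subgroups; and (ii) likewise $c\Lambda_1\sim c\Lambda_2$ for every nonzero real $c$, for the same reason.

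For multiplicativity, I would take $R,S\in\sos(\g)$, choose representatives $\alpha\in\sca(R)$ and $\beta\in\sca(S)$, and show that $\alpha\beta\in\sca(RS)$; this immediately forces $\eta(RS)=[\alpha\beta]=[\alpha][\beta]=\eta(R)\eta(S)$ in $\RR^\bullet/\QQ^\bullet$. Starting from $\g\sim\beta S\g$, applying (i) with $T=R$ gives $R\g\sim\beta RS\g$, and then (ii) with $c=\alpha$ gives $\alpha R\g\sim\alpha\beta RS\g$. Chaining this with $\g\sim\alpha R\g$ via transitivity of $\sim$ yields $\g\sim\alpha\beta RS\g$, as needed.

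For the kernel, one direction is immediate: if $R\in\soc(\g)$ then $1\in\sca(R)$, so $\eta(R)=[1]$. Conversely, assume $\eta(R)=[1]$; then some $\alpha\in\sca(R)$ is a nonzero rational. Applying Lemma \ref{scal}(1) with $b=\alpha^{-1}\in\QQ\setminus\{0\}$ places $1=b\alpha$ in $\sca(R)$, which is exactly the statement $\g\sim R\g$, i.e.\ $R\in\soc(\g)$. The main (and essentially only) step that requires the preliminaries (i)--(ii) is the multiplicativity; once those are in hand, both the group-law check and the kernel computation are one-line chains of substitutions.
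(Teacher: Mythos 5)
Your proof is correct and follows essentially the same route as the paper: the same chain $\g\sim\alpha R\g\sim\alpha\beta RS\g$ establishes $\alpha\beta\in\sca(RS)$, and the kernel computation via Lemma~\ref{scal}(1) with $b=\alpha^{-1}$ is identical. The only difference is that you make explicit the invariance of $\sim$ under $T\in\so(d)$ and under nonzero scaling, which the paper uses tacitly; this is a reasonable bit of added care but not a different argument.
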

\begin{proof}
 Let $R,S \in \sos(\varGamma)$ and choose $\alpha \in \sca(R)$ and
 $\beta \in \sca(S)$. We need to show that $\alpha\beta \in \sca(RS)$.
 By assumption, one has
 \begin{equation*}
   \varGamma \sim \alpha R \varGamma \sim \alpha R (\beta S \varGamma) = \alpha\beta RS\varGamma.
 \end{equation*}
  Thus $\alpha\beta \in \sca(RS)$, hence $\eta$ is a group homomorphism.
It remains to show that $\Ker(\eta)=\soc(\varGamma)$.
For $R \in \soc(\varGamma)$, the set $\sca(R)$ contains $1$, which means 
$R~\in ~\Ker(\eta)$.
Conversely, if $S \in \Ker(\eta)$, one has $\sca(S) \subset \QQ$. Let $\mu \in \sca(S)$. 
Due to Lemma~\ref{scal}(1),
we have $1 =\mu^{-1}\mu \in \sca(S)$, which proves $S~\in~\soc(\varGamma)$.
\end{proof}

Since $\soc(\varGamma)$ is the kernel of a group homomorphism, it is a normal subgroup of 
$\sos(\varGamma)$, so that the factor group $\sosg/\socg$ can be considered. 
It is isomorphic to the image of
$\eta$, which is a subgroup of $\RR^\bullet/\QQ^\bullet$ and thus Abelian.
To examine the structure of the factor group $\sosg/\socg$, we need the following result 
from the theory of Abelian groups.
\begin{theorem}\label{direct sum} Let G be a countable Abelian group.
 \begin{enumerate}
 \item[{\rm (1)}]
   If a prime number $p$ exists such that $x^p=1$ for all $x \in G$, 
   then $G$ is the direct 
   sum of subgroups of order $p$.  
 \item[{\rm (2)}]
   If a positive integer $n$ exists such that $x^n=1$ for all \,$x \in G$, then 
   G is the direct sum of cyclic groups of prime power orders that divide $n$. 
 \end{enumerate}
 
\end{theorem}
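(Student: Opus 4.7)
The plan is to handle the two parts using classical tools from the structure theory of abelian groups, with the main work for (2) being a primary-decomposition reduction to Pr\"ufer's first theorem.

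For (1), I would first observe that the hypothesis $x^p = 1$ for every $x \in G$ turns $G$ into a module over the field $\ZZ/p\ZZ$, i.e., a vector space over it. Since $G$ is countable, a basis $\{x_i\}_{i\in I}$ can be constructed by a standard greedy procedure: enumerate $G = \{g_1,g_2,\ldots\}$ and add $g_j$ to the basis exactly when it is linearly independent from the elements chosen so far. The resulting basis yields $G = \bigoplus_i \langle x_i \rangle$, the desired direct sum of subgroups of order $p$.

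For (2), I would factor $n = p_1^{a_1}\cdots p_k^{a_k}$ and invoke the primary decomposition
\begin{equation*}
  G \,=\, G_{p_1} \oplus \cdots \oplus G_{p_k},
\end{equation*}
where $G_{p_i} = \{x \in G : x^{p_i^{a_i}} = 1\}$. This decomposition is elementary: the pairwise coprime integers $n_i = n/p_i^{a_i}$ admit integers $c_i$ with $\sum_i c_i n_i = 1$, and the maps $x \mapsto x^{c_i n_i}$ produce the projections onto the $G_{p_i}$. It then suffices to show that each $G_{p_i}$, a countable abelian $p_i$-group of bounded exponent dividing $p_i^{a_i}$, is a direct sum of cyclic groups whose orders are powers of $p_i$ dividing $p_i^{a_i}$; reassembling these decompositions across $i$ gives the claim, since each such cyclic summand has prime power order dividing $n$.

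The main obstacle is this last step, namely Pr\"ufer's first theorem that every bounded abelian $p$-group is a direct sum of cyclic groups. A proof proceeds by induction on the exponent $p^a$: one picks a maximal independent family of elements of order exactly $p^a$, shows via a purity/lifting argument that the subgroup they generate is a direct summand $H$, and then applies the induction hypothesis to $G/H$, which has exponent at most $p^{a-1}$. In the context of this paper I would simply cite a standard reference, such as Kaplansky's \emph{Infinite Abelian Groups} or Fuchs's \emph{Abelian Groups}, rather than reproducing this somewhat delicate argument.
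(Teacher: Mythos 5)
Your proposal is correct. The paper does not actually prove this theorem at all: its ``proof'' is a one-line citation to Scott's \emph{Group Theory} (Thms.\ 5.1.9 and 5.1.12), so any genuine argument you give is necessarily a different route. What you outline is the standard textbook proof that such a citation stands in for: part (1) via viewing $G$ as a vector space over $\ZZ/p\ZZ$ and extracting a basis, and part (2) via the primary decomposition $G = G_{p_1}\oplus\cdots\oplus G_{p_k}$ (your coprimality argument with $\sum_i c_i n_i = 1$ is fine, and each $x^{c_i n_i}$ does land in $G_{p_i}$ since $(x^{c_i n_i})^{p_i^{a_i}}$ is a power of $x^n=1$) followed by Pr\"ufer's first theorem on bounded abelian $p$-groups. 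The only step you do not carry out in full is Pr\"ufer's theorem itself, which you reasonably delegate to Kaplansky or Fuchs --- exactly the level of delegation the paper itself employs, just one layer deeper. One small remark: the countability hypothesis is not actually needed in either part. In (1) Zorn's lemma produces a basis of any $\ZZ/p\ZZ$-vector space (your greedy enumeration is a countable special case of this), and in (2) the Pr\"ufer--Baer theorem covers bounded abelian groups of arbitrary cardinality; the paper presumably includes countability only because it is automatic for the groups $\sosg/\socg$ to which the theorem is applied.
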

\begin{proof}
 See \cite[Thms.\ 5.1.9 and 5.1.12]{Scott}.
\end{proof}

\begin{remark} \label{delta} Let $R \in \sosg$.
  For all elements $\alpha\in \RR$ with $\alpha R\g\subset \g$, one has 
  $|\alpha^d|=[\g : \alpha R \g] \in \NN$. This follows via the determinants of  
  basis matrices of the lattices involved. 
  Consequently, $\alpha$ is an algebraic number. 
\end{remark}

\begin{theorem}
  The group $\sosg/\socg$ is countable. Furthermore, it is the direct sum of cyclic groups of 
  prime power orders that divide $d$.
\end{theorem}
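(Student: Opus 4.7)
The plan is to leverage Lemma~\ref{eta hom} together with Remark~\ref{delta} to show that every element of $\sosg/\socg$ has order dividing $d$, and then deduce countability from the fact that the target $\RR^\bullet/\QQ^\bullet$ contains only countably many elements killed by $d$.

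First, I would note that Lemma~\ref{eta hom} gives an injection
\begin{equation*}
  \sosg/\socg \;\hookrightarrow\; \Img(\eta) \;\subset\; \RR^\bullet/\QQ^\bullet,
\end{equation*}
so it suffices to understand $\Img(\eta)$. Fix $R \in \sosg$ and $\alpha \in \sca(R)$. By Remark~\ref{den}, there is a nonzero integer $t$ with $t\alpha R\g \subset \g$. Then Remark~\ref{delta} applies to $t\alpha R$ and yields $(t\alpha)^d = [\g : t\alpha R\g] \in \NN$. Consequently $\alpha^d = (t\alpha)^d/t^d \in \QQ^\bullet$, so $\eta(R)^d = [\alpha^d] = [1]$. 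Thus every element of $\Img(\eta)$, and hence of $\sosg/\socg$, has order dividing $d$.

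Next, for countability, I would study the subgroup
\begin{equation*}
  H \;:=\; \bmidb{[r] \in \RR^\bullet/\QQ^\bullet}{[r]^d=[1]}
\end{equation*}
of $\RR^\bullet/\QQ^\bullet$, which by the previous paragraph contains $\Img(\eta)$. Every class in $H$ has a positive representative $r>0$ with $r^d \in \QQ^\bullet_{>0}$, and two positive representatives $r,r'$ of the same class satisfy $r/r' \in \QQ^\bullet_{>0}$, so $r^d$ and $(r')^d$ differ by an element of $(\QQ^\bullet_{>0})^d$. Hence the rule $[r] \mapsto r^d \bmod (\QQ^\bullet_{>0})^d$ yields a well-defined injection $H \hookrightarrow \QQ^\bullet_{>0}/(\QQ^\bullet_{>0})^d$, and the target is countable since $\QQ^\bullet_{>0}$ is. Therefore $H$, and with it the factor group $\sosg/\socg$, is countable.

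Finally, Theorem~\ref{direct sum}(2) applies: $\sosg/\socg$ is a countable Abelian group in which $x^d = 1$ for every $x$, and so it decomposes as a direct sum of cyclic groups whose orders are prime powers dividing $d$. The step I expect to require the most care is the countability argument, specifically checking that picking positive representatives produces a genuine injection into $\QQ^\bullet_{>0}/(\QQ^\bullet_{>0})^d$; once this is in place, everything else follows directly from results already established in the excerpt.
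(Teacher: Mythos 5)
Your proposal is correct, and its core structure coincides with the paper's: both pass to $\Img(\eta)$ via Lemma~\ref{eta hom}, use Remark~\ref{den} to clear denominators and Remark~\ref{delta} to get $(t\alpha)^d\in\QQ$, conclude $\eta(R)^d=[1]$, and then invoke Theorem~\ref{direct sum}(2). The only genuine divergence is the countability step. The paper argues that every class in $\Img(\eta)$ is represented by an algebraic number (again via Remark~\ref{delta}) and cites the countability of the algebraic numbers. You instead show that the entire $d$-torsion subgroup $H$ of $\RR^\bullet/\QQ^\bullet$ is countable, by embedding it into $\QQ^\bullet_{>0}/(\QQ^\bullet_{>0})^d$ through $[r]\mapsto r^d$; your worry about this map is unfounded, since injectivity follows because a positive real $d$-th root of $1$ is unique, so $(r/r')^d\in(\QQ^\bullet_{>0})^d$ forces $r/r'\in\QQ^\bullet$. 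Your route is slightly more self-contained (it needs only the countability of $\QQ$, not of the algebraic numbers) and proves the stronger statement that any subgroup of $\RR^\bullet/\QQ^\bullet$ of bounded exponent is countable; the paper's route is shorter and reuses Remark~\ref{delta} for both halves of the theorem. Either way the application of Theorem~\ref{direct sum}(2) is the same, so both arguments are complete.
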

\begin{proof}
  We consider again the group homomorphism $\eta\colon \sosg \longrightarrow 
  \RR^\bullet/\QQ^\bullet$.  Let $R \in \sosg$.
  This implies $\eta(R)= [\alpha]$ for some element $\alpha \in\sca(R)$. 
  Due to Remark~\ref{den}, there exists a nonzero integer $t$ with $t\alpha R\g \subset \g$.
  Furthermore, one has
  $\eta(R) = [t\alpha]$.
  By Remark~\ref{delta}, $t\alpha$ is an algebraic number. This means that all 
  elements 
  of $\eta(\sosg)$ are represented by algebraic numbers. 
  Thus, since the set of algebraic numbers is countable, also the group
  $\sosg/\socg$ is countable.

  According to Remark~\ref{delta}, one has $(t\alpha)^d \in \QQ$, which yields  
  \begin{equation}\label{delta^d is 1}
    \eta(R)^d=[t\alpha]^d = [(t\alpha)^d] = [1]
  \end{equation}
  in $\RR^\bullet/\QQ^\bullet$.
  Using the group isomorphism $\eta(\sosg) \simeq \sosg/\socg$, this shows that the order 
  of each element of $\sosg/\socg$ divides $d$.
  Theorem~\ref{direct sum}(2) then implies that 
  the group 
  $\sosg/\socg$ is the direct sum of cyclic groups of prime power orders. 
  Consequently, the prime power order of each cyclic group divides $d$.
\end{proof}
\begin{cor}
  If $d=p$ is a prime number, the factor group $\sosg/\socg$ is an elementary
  Abelian $p$-group, i.e., it is the direct sum of cyclic groups of order $p$. $\hfill\square$
    
\end{cor}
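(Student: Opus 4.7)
The plan is to derive the corollary as an immediate specialization of the preceding theorem combined with Theorem~\ref{direct sum}(1). The preceding theorem gives two pieces of information about $G := \sosg/\socg$: first, that $G$ is countable (via the algebraicity argument based on Remark~\ref{delta}), and second, that for every $R \in \sosg$ one has $\eta(R)^d = [1]$ in $\RR^\bullet/\QQ^\bullet$, so every element of $G$ has order dividing $d$.

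Setting $d=p$ with $p$ prime, the second fact becomes $x^p = 1$ for all $x \in G$, since the only divisors of a prime $p$ are $1$ and $p$. This is precisely the hypothesis of Theorem~\ref{direct sum}(1), so I would invoke that theorem directly to conclude that $G$ is a direct sum of subgroups of order $p$, i.e.\ an elementary Abelian $p$-group.

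Alternatively, and perhaps even more cleanly, one can just quote the preceding theorem: it already states that $G$ is a direct sum of cyclic groups of prime power orders dividing $d=p$. Since the only prime power dividing a prime $p$ (other than $1$) is $p$ itself, every nontrivial cyclic summand is of order $p$, which is the definition of an elementary Abelian $p$-group. There is no real obstacle here; the corollary is a one-line consequence of the theorem, and the only choice to make is whether to cite Theorem~\ref{direct sum}(1) or simply to read off the statement of the theorem with $d$ specialized to a prime. I would opt for the latter phrasing for maximal brevity.
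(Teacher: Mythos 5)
Your proposal is correct and matches the paper's intent exactly: the corollary is marked with $\square$ precisely because it is an immediate specialization of the preceding theorem (the only prime power dividing a prime $p$, other than $1$, is $p$ itself), and your alternative route via Theorem~\ref{direct sum}(1) is equally valid since $\eta(R)^p=[1]$ for all $R$. Nothing is missing.
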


\begin{cor}[Rational Lattices] Let $\g$ be a lattice in $\RR^d$ such that 
  $\langle x , x \rangle \in \QQ$
  for all  $x \in \g$, where $\langle\, \cdot\,\, , \,\cdot\, \rangle$ denotes the standard scalar product 
  in $\RR^d$. Lattices satisfying the above property 
  are also called \,{\em rational} {\rm (cf.~\cite{Conway})}. 
  For these lattices, the group $\sosg/\socg$ is 
  an elementary Abelian $2$-group when $d$ is even.
  If $d$ is odd, one has $\sosg=\socg$. Either way, one has 
  \begin{equation*}
     \li(\sosg\re)^2 \subset \socg.
  \end{equation*} 
\end{cor}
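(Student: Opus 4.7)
The plan is to refine the exponent bound from the preceding theorem by exploiting rationality: instead of merely controlling $\eta(R)^d$, we will control $\eta(R)^2$ directly, independently of $d$. First I would pick $R \in \sosg$, choose $\alpha \in \sca(R)$, and (via Remark~\ref{den}) fix a nonzero integer $t$ with $t\alpha R\g \subset \g$. For any nonzero $x \in \g$, the vector $t\alpha Rx$ lies in $\g$, so by rationality of $\g$ its squared norm is rational. Since $R$ is orthogonal,
$$\langle t\alpha Rx, t\alpha Rx\rangle = (t\alpha)^2 \langle x,x\rangle,$$
and $\langle x,x\rangle$ is a nonzero rational by positive definiteness and rationality. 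Hence $(t\alpha)^2 \in \QQ$, which translates to $\eta(R)^2 = [(t\alpha)^2] = [1]$ in $\RR^\bullet/\QQ^\bullet$.

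Via $\Ker(\eta) = \socg$ and the isomorphism $\eta(\sosg) \simeq \sosg/\socg$ from Lemma~\ref{eta hom}, this yields $R^2 \in \socg$ for every $R \in \sosg$, which is already the ``either way'' inclusion $(\sosg)^2 \subset \socg$. It remains to split by the parity of $d$.

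For odd $d$, I would combine the new relation $\eta(R)^2 = [1]$ with $\eta(R)^d = [1]$ from the proof of the previous theorem. Since $\gcd(2,d) = 1$, a Bezout argument in the Abelian group $\RR^\bullet/\QQ^\bullet$ forces $\eta(R) = [1]$, so $R \in \socg$ and hence $\sosg = \socg$. For even $d$, the factor group $\sosg/\socg$ is countable Abelian (by the previous theorem) and every element squares to the identity, so Theorem~\ref{direct sum}(1) applied with $p = 2$ identifies $\sosg/\socg$ as a direct sum of cyclic groups of order $2$, i.e., an elementary Abelian $2$-group.

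The only substantive step is the opening norm computation that upgrades the exponent from $d$ to $2$; everything afterwards is formal consequence of Lemma~\ref{eta hom}, the previous theorem, and the structure theorem for Abelian groups. The main (very mild) obstacle is cleanly extracting the factor $(t\alpha)^2$ from $\langle t\alpha Rx, t\alpha Rx\rangle$, which is immediate from $R \in \so(d)$.
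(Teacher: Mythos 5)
Your proposal is correct and follows essentially the same route as the paper: the key step in both is that rationality of the quadratic form forces $(t\alpha)^2\in\QQ$, after which the even case follows from Theorem~\ref{direct sum}(1) and the odd case from combining the exponent-$2$ relation with $\alpha^d\in\QQ$. The only (cosmetic) difference is that you deduce $R^2\in\socg$ from $\eta(R^2)=[1]$ and $\Ker(\eta)=\socg$, whereas the paper verifies $\g\sim R^2\g$ directly via an explicit sublattice inclusion; both are valid.
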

\begin{proof}
  Let $R \in \sosg$. By Remark~\ref{den}, there exists a nonzero real number $\alpha$ 
  such that $\alpha R \g \subset \g$. 
  By assumption, one has
   $\langle \alpha R \gamma,\alpha R \gamma\rangle \in \QQ$\, for all $\gamma \in \g$. 
  Hence
  $\alpha^2\in \QQ$, say $\alpha^2=r/s$, where 
  $r, s \in\ZZ\setminus\{0\}$. 
  Since $s\alpha^2=r\in\ZZ$ and $\alpha R\g\subset\g$, 
  one gets
  $$
  \g\supset s\alpha R(\alpha R\g)=s\alpha^2R^2\g
  \subset R^2\g,
  $$
  whence
  $$
  rR^2\g\subset \li(\g \cap R^2\g\re).
  $$
  Thus both $[\g:rR^2\g]$ and $[R^2\g:rR^2\g]$ are finite. This implies
  $\g\sim R^2\g$, so that $R^2$ is a coincidence
  rotation of $\g$. Consequently,\, $\li(\sosg\re)^2\subset\socg$.
  This means that every element of the factor group $\sosg/\socg$ is of order $1$ or $2$. 
  Thus, the factor group is an elementary Abelian $2$-group by 
  Theorem~\ref{direct sum}(1).
 
  If $d$ is odd, set $d = 2m+1$ with $m \in \NN$. Then
   \begin{equation*}
     \alpha(\alpha^2)^m = \alpha^d \,\, \in \QQ
   \end{equation*}
   yields $\alpha \in \QQ$, because $\alpha^2 \in \QQ$. 
   Thus $\eta(R)=[\alpha] = [1]$ in $\RR^\bullet/\QQ^\bullet$ for all $R \in \sosg$, whence 
   $\sosg/\socg$ is the trivial group.
\end{proof}

\section{Outlook}
In view of Penrose tilings and similar models,
where the translation module is not a lattice, 
it is desirable to generalise the above notions of 
similarity and coincidence rotations from lattices to modules.
Some progress has been made in this direction for certain modules over subrings $S$ of the 
rings of integers of real algebraic number fields. 
More precisely, similar results \cite{G} to those presented here hold for $S$-modules of 
rank $d$ that span $\RR^d$.

\section*{Acknowledgements}
The authors are grateful to U.\ Grimm, C.\ Huck, R.V.\ Moody and P.\ Zeiner 
for valuable discussions and 
comments on the manuscript. This work was supported by the German Research Council (DFG), 
within the CRC $701$. S.G.\ would like to thank the IUCr for financial support to attend 
ICQ10.

\end{document}